\theoremstyle{plain}
\newtheorem{theorem}{Theorem}
\newtheorem{lemma}[theorem]{Lemma}
\theoremstyle{definition}
\theoremstyle{remark}
\begin{document}

\newcommand{\lcm}{\textup{lcm}}
\newcommand{\Mod}{\negmedspace\mod}
\newcommand{\OM}{{\mathcal{OM}}}
\newcommand{\OD}{{\mathcal{OD}}}
\renewcommand{\OE}{{\mathcal{OE}}}
\newcommand{\CM}{{\mathcal{CM}}}
\newcommand{\CD}{{\mathcal{CD}}}
\newcommand{\CE}{{\mathcal{CE}}}
\newcommand{\nOMq}{\overline{\mathcal{OM}_q}}
\newcommand{\nODq}{\overline{\mathcal{OD}_q}}
\newcommand{\nOEq}{\overline{\mathcal{OE}_q}}
\newcommand{\nCMq}{\overline{\mathcal{CM}_q}}
\newcommand{\nCDq}{\overline{\mathcal{CD}_q}}
\newcommand{\nCEq}{\overline{\mathcal{CE}_q}}
\newcommand{\Ord}{{\mathcal{O}}}
\newcommand{\Set}{{\mathcal{S}}}
\hyphenation{dividing}

\title[\tiny\upshape\rmfamily Using recurrence relations to count in symmetric groups]{}
\thanks{I am grateful to the referee for suggesting various changes
to this manuscript.}
\subjclass{Primary: 05A19; Secondary: 20B30}
\date{Submitted: 21 December, 2000}

\begin{center}\large\sffamily\mdseries
Using recurrence relations to count\\
certain elements in symmetric groups
\end{center}

\author{{\sffamily S.\,P. Glasby}}

\begin{abstract}
We use the fact that certain cosets of the stabilizer of points are
pairwise conjugate in a symmetric group $S_n$ in order to construct 
recurrence relations for enumerating certain subsets of $S_n$.
Occasionally one can find `closed form' solutions to such recurrence
relations. For example, the probability that a random element of $S_n$
has no cycle of length divisible by $q$ is 
$\prod_{d=1}^{\lfloor n/q\rfloor} (1-\frac{1}{dq})$.
\end{abstract}

\maketitle

\section{Introduction}\label{S:intro}
Let $S_n$ denote the symmetric group of degree $n$. If
$\Sigma\subseteq S_n$, then let $\OM_q(\Sigma)$, $\OD_q(\Sigma)$,
$\OE_q(\Sigma)$ denote the number of elements in $\Sigma$ having order:
a multiple of $q$, dividing $q$, and equal to $q$,
respectively. Similarly, let $\CM_q(\Sigma)$, $\CD_q(\Sigma)$,
$\CE_q(\Sigma)$ denote the number of elements in $\Sigma$ having a
cycle (in its disjoint cycle decomposition) of length:
a multiple of $q$, dividing $q$, and equal to $q$, respectively.
It is not hard to write down recurrence relations satisfied by
$\OM_q(C_{n,k}), \dots,\CE_q(C_{n,k})$ where $C_{n,k}$ is a certain
coset of a stabilizer of $k-1$ points. Given a function $N$, denote by
$\overline{N}$ the function defined by
$\overline{N}(\Sigma)=|\Sigma|-N(\Sigma)$. We shall give a
`closed form' solution to the recurrence relation for the number,
$\nCMq(C_{n,k})$, of elements in $C_{n,k}$ having {\it no} cycles of length
divisible by $q$. 

Asymptotic properties of the order and cycle decomposition, of a random
element of the symmetric group, $S_n$, were studied
by Erd\"os and Tur\'an in a series of seven papers entitled
``On some problems in a statistical group theory''
published between 1956 and 1972. It is shown in \cite{pEpT67b} that the
distribution $X_n$ of $\log |\tau|$, where $\tau$ is a uniformly
random element of 
$S_n$, approaches (as $n\to\infty$) the normal distribution
$N(\mu,\sigma^2)$ where $\mu=\frac12\log^2 n$ and
$\sigma^2=\frac13\log^3 n$.
The expected order, $E_n$, of a uniformly  random element of $S_n$ was
shown by Goh and 
Schmutz~\cite{wG91} to satisfy $\log E_n\sim O\left(\sqrt{n/\log n}\right)$
as $n\to\infty$. This is substantially smaller than the maximal order,
$M_n$, of an element of $S_n$ as $\log M_n\sim\sqrt{n\log n}$ as
$n\rightarrow\infty$ (see \cite[p.~222]{eL09}). 

The number of $x\in S_n$ satisfying $x^q=1$ is $\OD_q(S_n)$. Wilf
\cite{hW86} showed for fixed $q$ that 
$\OD_q(S_n)/n!\sim g_q(n)$
where $g_q(n)$ is a given function of $q$ and $n$. In a similar vein,
Pavlov \cite{aP81} showed that certain random variables associated
with cycle structure are asymptotically normal when $n\rightarrow\infty$.

One can show that the probability that an element of
$S_n$ has no cycles of lengths $a_1,\dots,a_m$ is at most
$(\sum_{k=1}^m a_k^{-1})^{-1}$ (see \cite[Theorem~VI]{pEpT67a}).
To estimate $\nCMq(S_n)$, take $a_k=qk$ and $m=\lfloor n/q\rfloor$.
Comparisons with integrals show that
$q(1+\log m)^{-1}\le \left(\sum_{k=1}^m (qk)^{-1}\right)^{-1}\le
q(\log(m+1))^{-1}.
$
This is unhelpful if $1+\log m\le q$ (as probabilities are always $\le1$).

The motivation for this work arose from the
following problem in probabilistic group theory. 
Given $\varepsilon>0$ and a group $G$ isomorphic to precisely one of
the groups $G_1,G_2,\dots$, then (when possible) determine with
probability $\ge 1-\varepsilon$ whether $G$ is isomorphic, or is not
isomorphic, to $G_k$ after testing the order of $N(\varepsilon,G)$
randomly chosen elements of $G$. This problem seems most likely to be
successful if the sequence $G_1, G_2, \dots$ comprises groups that are
finite and simple (or with few composition factors). For such groups the set
of orders of elements of $G$ frequently characterizes $G$ (see,
for example, \cite{vMwS99}). The task is clearly impossible if different
groups $G_k$ and $G_\ell$ have the same proportions of elements of each
order. It follows from the `law of large numbers' \cite{pM78}
and the above result of Erd\"os and Tur\'an that this task is
possible if $G$ is a symmetric group and $G_k=S_k$ for all $k$. By using
additional information we can give a smaller value of $N(\varepsilon,G)$.
Thus it is important to be able to quickly calculate {\it actual}
values of $\OM_q(S_n)$, $\OD_q(S_n)$, etc and not merely {\it asymptotic}
approximations as $n\rightarrow\infty$.

If $q$ is a prime-power, then \cite[Lemma~I]{pEpT67a} can be
interpreted as giving a formula for the number $\nOMq(S_n)$ of elements
of $S_n$ whose order is not a multiple of $q$. Note that
$\nOMq(\Sigma)\le\nCMq(\Sigma)$, and equality holds if $q$ is a
prime-power. We shall give a more general formula in the next section
for $\nCMq(C_{n,k})$ which specializes when $k=1$ to
$\nCMq(S_n)=\prod_{j=1}^n (j-[q\mid j])$ where $[q\mid j]$ equals~1 if $q$
divides~$j$, and~0 otherwise. If $P$ is a logical
proposition, then $[P]$ denotes 1 if $P$ is true, and 0 otherwise. This
notation, attributed to Iverson \cite[p.~24]{GKP94}, is useful for
reducing a collection of formulas involving different cases, to one formula.

\section{Recurrence relations}\label{S:Recurrence}

The symmetric group $S_n$ acts naturally on the set $\{1,\dots,n\}$.
If $k\in\{0,\dots,n\}$, let $G_{n,k}$ denote the subgroup of $S_n$
that fixes each of $1,2,\dots,k$. 
If $k\in\{1,\dots,n\}$, let $C_{n,k}$
denote the coset $G_{n,k-1}(1,2,\dots,k)$.
Note that $G_{n,k}$ is permutationally isomorphic to
$S_{n-k}$. Furthermore $C_{n,1}=S_n$ and $C_{n,n}=\{(1,2,\dots,n)\}$.

The six recurrence relations below use the ordering: 
$(n',k')<(n,k)$ if and only if $n'<n$, or $n'=n$ and $k'>k$.

\begin{lemma}\label{L:Recurrences}
Let $q,n,k$ be positive integers where $k\le n$.
Let $q=q_1\cdots q_r$ where $q_1,\dots,q_r$ are powers of distinct
primes. Let $\Delta(q,k)=\prod_{j=1}^r 
q_j^{[q_j\nmid k]}$ and $\triangledown(q,k)=\prod_{j=1}^r q_j^{[q_j\mid k]}$.
If $k<n$, then
\begin{align}
\nOMq(C_{n,k})&=\overline{\OM_{\Delta(q,k)}}(C_{n-k,1})+
  (n-k)\nOMq(C_{n,k+1}),\\
\OD_q(C_{n,k})&=[k\mid q]\OD_q(C_{n-k,1})+(n-k)\OD_q(C_{n,k+1}),\\
\nOEq(C_{n,k})&=\sum_{d\,\mid\,\triangledown(q,k)}
  \overline{\OE_{d\Delta(q,k)}}(C_{n-k,1})+(n-k)\nOEq(C_{n,k+1}),\\
\nCMq(C_{n,k})&=[q\nmid k]\nCMq(C_{n-k,1})+(n-k)\nCMq(C_{n,k+1}),\\
\nCDq(C_{n,k})&=[k\nmid q]\nCDq(C_{n-k,1})+(n-k)\nCDq(C_{n,k+1}),\\
\nCEq(C_{n,k})&=[q\ne   k]\nCEq(C_{n-k,1})+(n-k)\nCEq(C_{n,k+1}),
\end{align}
where the respective initial conditions are:
\begin{align*}
&\nOMq(C_{n,n})=[q\nmid n],&\quad
&\OD_q(C_{n,n})=[n\mid q],&\quad
&\nOEq(C_{n,n})=[q\ne n],\\
&\nCMq(C_{n,n})=[q\nmid n],&\quad
&\nCDq(C_{n,n})=[n\nmid q],&\quad
&\nCEq(C_{n,n})=[q\ne n].
\end{align*}
\end{lemma}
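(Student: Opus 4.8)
The plan is to give an explicit model of $C_{n,k}$, partition it, and read the six recurrences and the initial conditions off that partition, deferring a little elementary number theory to the order-based counts.

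First I would record the shape of the coset: writing $x=g\,(1,2,\dots,k)$ with $g\in G_{n,k-1}$, one sees that $x$ sends $1\mapsto2\mapsto\cdots\mapsto k-1$ and $k\mapsto1$, that $g$ is recovered from $x$, and that the one remaining degree of freedom is the value $v:=g(k)\in\{k,k+1,\dots,n\}$ (for $k=1$ put $v=x(1)$). So $C_{n,k}=A\sqcup\bigsqcup_{m=k+1}^{n}B_m$, where $A=\{x:v=k\}$ and $B_m=\{x:v=m\}$. I would then prove two facts. (i) If $v=k$ then $g\in G_{n,k}$, so $x=g\,(1,\dots,k)$ is the product of two permutations with disjoint supports: the $k$-cycle $(1,\dots,k)$, and an element which, after identifying $\{k+1,\dots,n\}$ with $\{1,\dots,n-k\}$, is an arbitrary $h\in C_{n-k,1}$. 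Hence $x\mapsto h$ is a bijection $A\to C_{n-k,1}$ under which $x$ has the cycle type of $h$ plus one extra $k$-cycle; in particular $|x|=\lcm(k,|h|)$. (ii) $B_m$ is the set of $x\in S_n$ with $x(u_{i-1})=u_i$ for $i=1,\dots,k$ along the sequence $u_0,\dots,u_k=k,1,2,\dots,k-1,m$, and $C_{n,k+1}$ is the analogous set for the sequence $k{+}1,1,2,\dots,k$; any two such sets, for sequences of equal length, are conjugate in $S_n$ by a permutation carrying one sequence to the other, so $B_m$ is conjugate to $C_{n,k+1}$. (This is the ``pairwise conjugate cosets'' of the abstract.)

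Since each of the six counting functions depends only on the multiset of cycle types in its argument, fact (ii) supplies, for each such function $N$, the common term $(n-k)\,N(C_{n,k+1})$ in that function's recurrence. It remains to find the contribution of $A$ via fact (i). For $\nCMq,\nCDq,\nCEq$ this is immediate: adjoining a $k$-cycle to $h$ destroys ``no cycle-length a multiple of $q$ / dividing $q$ / equal to $q$'' precisely when $q\mid k$ / $k\mid q$ / $k=q$, yielding the prefactors $[q\nmid k]$, $[k\nmid q]$, $[q\ne k]$. For $\OD_q$, use $\lcm(k,\ell)\mid q\iff k\mid q$ and $\ell\mid q$, so the $A$-contribution is $[k\mid q]\OD_q(C_{n-k,1})$. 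The last two cases rest on the single fact that, writing $q=q_1\cdots q_r$ with the $q_j$ powers of distinct primes, one has $q_j\mid\lcm(a,b)\iff q_j\mid a$ or $q_j\mid b$ for each $j$; hence $q\nmid\lcm(k,\ell)\iff\Delta(q,k)\nmid\ell$, which gives the $\nOMq$-contribution $\overline{\OM_{\Delta(q,k)}}(C_{n-k,1})$. Pushing this prime-power by prime-power, one finds that $\lcm(k,\ell)=q$ holds iff $k\mid q$ and $\ell=d\,\Delta(q,k)$ for some $d\mid\triangledown(q,k)$; so the $A$-contribution to $\nOEq$ is $|C_{n-k,1}|$ minus the number of $h\in C_{n-k,1}$ with $\lcm(k,|h|)=q$, which under this parametrization becomes the indicated sum over $d\mid\triangledown(q,k)$ of $\overline{\OE}$-values. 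The initial conditions drop out of $C_{n,n}=\{(1,2,\dots,n)\}$, a single $n$-cycle of order $n$.

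Everything but one of the six verifications in the last paragraph is an immediate consequence of the disjoint-support decomposition (i) and the conjugacy (ii). The step I expect to absorb the real work is the $\nOEq$ case: one must enumerate precisely the integers $\ell$ with $\lcm(k,\ell)=q$ in terms of $\Delta(q,k)$ and $\triangledown(q,k)$, and then keep the passage to the complement straight; the rest is purely structural.
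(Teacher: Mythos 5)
Your decomposition is the paper's own, reached from the other end: the paper writes $G_{n,k-1}=G_{n,k}\cup\bigcup_{\ell>k}G_{n,k}(k,\ell)$ and post-multiplies by $(1,\dots,k)$, so that your $A$ is $G_{n,k}(1,\dots,k)$ and your $B_m$ are the sets $G_{n,k}(1,\dots,k,\ell)$, each conjugate to $C_{n,k+1}$ by a transposition. Your facts (i) and (ii), and the reduction of the order-based recurrences to $|ab|=\lcm(|a|,|b|)$ together with the disjoint-support cycle description, are exactly the paper's argument. The initial conditions, recurrences (2), (4), (5), (6), and recurrence (1) (via $q\nmid\lcm(k,\ell)\iff\Delta(q,k)\nmid\ell$) are handled completely and correctly.

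The one step you explicitly defer is, however, a genuine gap, and it cannot be closed as stated. You correctly compute the $A$-contribution to $\nOEq(C_{n,k})$ as $|C_{n-k,1}|$ minus the number of $h$ with $\lcm(k,|h|)=q$, and you enumerate those $h$ correctly (including the hypothesis $k\mid q$, which the paper's fact (3) omits). That quantity equals
\[
|C_{n-k,1}|-[k\mid q]\sum_{d\,\mid\,\triangledown(q,k)}\OE_{d\Delta(q,k)}(C_{n-k,1}),
\]
whereas the term printed in recurrence (3) is $\sum_{d\mid\triangledown(q,k)}\overline{\OE_{d\Delta(q,k)}}(C_{n-k,1})$, in which \emph{each} barred summand contributes its own copy of $|C_{n-k,1}|$; the two expressions differ by $(\delta-1)|C_{n-k,1}|$ when $k\mid q$, where $\delta$ is the number of divisors of $\triangledown(q,k)$, and they also disagree when $k\nmid q$. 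For instance, with $q=k=2$ and $n=5$ the printed sum is $\overline{\OE_1}(S_3)+\overline{\OE_2}(S_3)=5+3=8$, while the true contribution is $2$ (only the two $3$-cycles of $S_3$ give $\lcm(2,|h|)\ne 2$). So the assertion that your count ``becomes the indicated sum'' is false; the defect lies in the statement of (3) itself (the paper's proof is no more detailed here, and its justifying fact (3) likewise fails for $k\nmid q$), and no rearrangement of your argument will yield the printed right-hand side. You should either prove the corrected recurrence displayed above or restrict (3) to the cases $k\mid q$ and $\triangledown(q,k)=1$ in which it is valid.
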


\begin{proof}
The initial conditions are easily verified.
Suppose now that $k<n$, and consider the coset decomposition
\[
G_{n,k-1}=G_{n,k}\cup \bigcup_{\ell\,>k} G_{n,k}(k,\ell).
\]
Post-multiplying by $(1,\dots,k)$ gives
\begin{equation}
C_{n,k}=G_{n,k}(1,2,\dots,k)\cup 
\bigcup_{\ell\,>k} G_{n,k}(1,2,\dots,k,\ell).\label{E:Lagrange}
\end{equation}
Note that the set of elements moved by $a\in G_{n,k}$ (i.e. the
support of~$a$) is disjoint from the support of $b=(1,\dots,k)$. Also,
if $\ell\,>k$, then $G_{n,k}(1,2,\dots,k,\ell)$ is the conjugate
of $C_{n,k+1}$ by $(k+1,\ell)$. Now $ab$ has no cycle of length a
multiple of $q$ if and only if $k$ is not a multiple of $q$, and $a$
has no cycle of length a multiple of $q$. That is,
\[
\nCMq(G_{n,k}(1,2,\dots,k))=[q\nmid k]\nCMq(G_{n,k})
=[q\nmid k]\nCMq(C_{n-k,1}).
\]
It follows from Eqn. (\ref{E:Lagrange}) that
\[
\nCMq(C_{n,k})=[q\nmid k]\nCMq(C_{n-k,1})+(n-k)\nCMq(C_{n,k+1}).
\]
The recurrence relations (5) and (6) are
derived similarly, and the recurrence relations (1)--(3) can be
easily derived from the facts below. Note that the order of $ab$
satisfies $|ab|=\text{lcm}(|a|,|b|)$. Hence (1) $q\nmid |ab|$ if and only
if $\Delta(q,k)\nmid |a|$; (2) $|ab|\mid q$ if and only if $|a|\mid q$ and
$k\mid q$; and (3) $|ab|= q$ if and only if $|a|=d\Delta(q,k)$ where $d\mid
\triangledown(q,k)$. 
\end{proof}

The recurrence relations for the complementary numbers
$\OM_q(C_{n,k})$, $\nODq(C_{n,k})$ etc can be determined from
those above using the fact that
$\overline{N}(\Sigma)=|\Sigma|-N(\Sigma)$.
We shall give a surprising `closed form' solution to the recurrence
relation for $\nCMq(C_{n,k})$.
Let $n~\text{mod}~q$ be the unique integer $r$ satisfying $n\equiv r\pmod q$
and $0\le r<n$. The `mod' function is notorious for not preserving
order, so the formula for $\nCMq(C_{n,k})$ below is curious as it
involves both `$\le$' and `mod'.

\begin{theorem}\label{T:Main}
If $q,n,k$ are positive integers and $1\le k\le n$, then 
\begin{equation}
\nCMq(C_{n,k})=f_q(n-k+1)-[(-k) \Mod q \le s]f_q(n-k)\label{E:main}
\end{equation}
where $f_q(n)=\prod_{j=1}^n (j-[q\mid j])$ and $s=q-2-(n \Mod q)$.
In~particular, $\nCMq(S_n)=f_q(n)$.
\end{theorem}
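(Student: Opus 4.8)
The plan is to verify that the right-hand side of~\eqref{E:main} satisfies the recurrence relation~(4) together with its initial condition $\nCMq(C_{n,n})=[q\nmid n]$, since Lemma~\ref{L:Recurrences} shows these data determine $\nCMq(C_{n,k})$ uniquely under the ordering $(n',k')<(n,k)$. Write $g_q(n,k)$ for the proposed formula $f_q(n-k+1)-[(-k)\Mod q\le s]\,f_q(n-k)$, where $s=q-2-(n\Mod q)$. First I would handle the initial condition: when $k=n$ we have $f_q(1)=1-[q\mid 1]$, which is $1$ unless $q=1$, and $f_q(0)=1$ (empty product); the Iverson bracket $[(-n)\Mod q\le s]$ with $s=q-2-(n\Mod q)$ must be shown to collapse to $[q\mid n]$ (with an appropriate check of the degenerate case $q=1$), so that $g_q(n,n)=1-[q\mid n]=[q\nmid n]$.

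The heart of the argument is checking that $g_q(n,k)=[q\nmid k]f_q(n-k+1) + (n-k)\,g_q(n,k+1)$ for $k<n$. Here I would expand $g_q(n,k+1)=f_q(n-k)-[(-k-1)\Mod q\le s]\,f_q(n-k-1)$ and use the defining relation $f_q(m)=(m-[q\mid m])\,f_q(m-1)$, which gives $f_q(n-k+1)=(n-k+1-[q\mid n-k+1])\,f_q(n-k)$ and $f_q(n-k)=(n-k-[q\mid n-k])\,f_q(n-k-1)$. Substituting these, the claimed identity becomes a linear relation among $f_q(n-k)$ and $f_q(n-k-1)$; matching the coefficient of $f_q(n-k)$ forces
\[
1-[q\mid n-k+1]\cdot[\text{something}] \;=\; [q\nmid k] + (n-k)\cdot\frac{?}{?},
\]
so really one reduces everything to $f_q(n-k-1)$ via the factorizations above and then compares the two Iverson-bracket coefficients of $f_q(n-k-1)$ and the two scalar coefficients of $f_q(n-k)$ separately. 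This is a finite case analysis on the residues $n\bmod q$ and $k\bmod q$: the bracket $[(-k)\Mod q\le s]$ with $s=q-2-(n\Mod q)$ is equivalent to $(k\Mod q)+(n\Mod q)\ge 2$ together with $q\nmid k$ — or more precisely to an inequality relating $k\bmod q$, $n\bmod q$, and the carry in $(n-k)\bmod q$ versus $(n-k+1)\bmod q$. I expect the main obstacle to be precisely this bookkeeping: translating $(-k)\Mod q\le q-2-(n\Mod q)$ and its shifted version $(-k-1)\Mod q\le q-2-(n\Mod q)$ into statements about whether $q$ divides $k$, $n-k$, or $n-k+1$, and confirming that the "off-by-one" in passing from $k$ to $k+1$ interacts correctly with the factor $[q\nmid k]$ on the $C_{n-k,1}$ term and with the wrap-around when $k\equiv 0$ or $k\equiv -1\pmod q$.

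Once both the initial condition and the recurrence step are verified for all relevant $(n,k)$, uniqueness of solutions to~(4) under the stated well-ordering gives $\nCMq(C_{n,k})=g_q(n,k)$ for all $1\le k\le n$. The final assertion $\nCMq(S_n)=f_q(n)$ then follows by setting $k=1$: we get $s=q-2-(n\Mod q)$ and the bracket $[(-1)\Mod q\le s]=[q-1\le q-2-(n\Mod q)]$, which holds only if $n\Mod q\le -1$, i.e. never, so the second term vanishes and $\nCMq(S_n)=f_q(n-1+1)=f_q(n)=\prod_{j=1}^n(j-[q\mid j])$, as claimed in the introduction. A sanity check against small cases (e.g. $q=1$, where $f_1(n)=0$ for $n\ge 1$, consistent with every permutation having a cycle of length divisible by $1$; and $q>n$, where $f_q(n)=n!$) would be worth including to guard against sign or off-by-one errors in the bracket.
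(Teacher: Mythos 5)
Your overall strategy --- induct on $(n,k)$ under the stated ordering, check the initial condition at $k=n$, and verify that the proposed formula $g_q(n,k)$ satisfies recurrence (4) --- is exactly the paper's. The base case and the $k=1$ specialization are handled correctly. But the inductive step, which is the entire content of the theorem, is only sketched, and the sketch contains errors that would derail the case analysis you propose. First, the identity you set out to verify is misstated: recurrence (4) requires $g_q(n,k)=[q\nmid k]\,g_q(n-k,1)+(n-k)\,g_q(n,k+1)$, and $g_q(n-k,1)=f_q(n-k)$ (since $[(-1)\Mod q\le q-2-((n-k)\Mod q)]=[q-1\le q-2-((n-k)\Mod q)]=0$), not $f_q(n-k+1)$ as you wrote; with your version the coefficients cannot match. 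Second, your translation of the bracket is off: for $q\nmid k$, the condition $(-k)\Mod q\le q-2-(n\Mod q)$ is equivalent to $(k\Mod q)\ge(n\Mod q)+2$, not to $(k\Mod q)+(n\Mod q)\ge 2$, and you must not restrict to $q\nmid k$, since when $q\mid k$ the bracket equals $[\,n\Mod q\le q-2\,]$, which is usually $1$. These are precisely the ``wrap-around'' subtleties you flag as the main obstacle, and the proposal leaves them unresolved.

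The paper's device for taming this bookkeeping, which your proposal lacks, is to rewrite $[(-k)\Mod q\le s]$ as $\sum_{j=0}^{s}[q\mid k+j]$ (valid because at most one summand is nonzero when $n\Mod q<q-1$, and $s=-1$ otherwise). With this, passing from $k$ to $k+1$ is just a reindexing of the sum, and the verification reduces to two short identities, $[q\mid n-k]\sum_{j=1}^{s+1}[q\mid k+j]=0$ and $[q\mid n-k+1]=[q\mid k+s+1]$ (the latter from $s\equiv-n-2\pmod q$), combined with $f_q(n-k)=(n-k-[q\mid n-k])f_q(n-k-1)$. A residue-by-residue case analysis of the kind you describe could in principle be pushed through instead, but as written the proposal neither carries it out nor sets it up correctly, so the proof is not complete.
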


\begin{proof}
The result is trivially true when $q=1$. Assume henceforth that $q>1$.
We use induction on $(n,k)$ ordered  via
$(n',k')<(n,k)$ when $n'<n$, or
$n'=n$ and $k'>k$. Consider formula (\ref{E:main}) when $k=n$.
By Lemma~\ref{L:Recurrences}, $\nCMq(C_{n,n})=[q\nmid n]$. Since
\[(-n)\Mod q=[q\nmid n]q-(n\Mod q),\]
it follows that
\[
[(-n)\Mod q\le q-2-(n\Mod q)]=[\;[q\nmid n]q\le q-2]=[q\mid n].
\]
The right-hand side of (\ref{E:main}) is
$f_q(1)-[q\mid n]f_q(0)=1-[q\mid n]=[q\nmid n]$, and so (\ref{E:main}) is true
when $k=n$. Assume now that
(\ref{E:main}) is true for $(n',k')<(n,k)$ where $1\le k<n$. Thus
\[\nCMq(S_{n-k})=\nCMq(C_{n-k,1})=f_q(n-k)
\]
as $[q-1\le q-2-(n\Mod q)]=0$.

Observe that $[(-k) \Mod q \le s]=\sum_{j=0}^s [q\mid k+j]$ where
$[q\mid(k+j)]$ is abbreviated $[q\mid k+j]$. When $n\Mod
q=q-1$, then $s=-1$ and both sides are zero. (A sum $\sum_{j=0}^{-1} a_j$
is zero by convention.) Suppose that $n\Mod q<q-1$. Then
at most one summand $[q\mid k+j]$ is non-zero, and the equation
$[q\mid k+j]=1$ is equivalent to the equation
$(-k) \Mod q=j$. Hence $\sum_{j=0}^s [q\mid k+j]=[(-k) \Mod q\le s]$, as required.

We shall now prove that
\[
\nCMq(C_{n,k})=f_q(n-k+1)-f_q(n-k)\sum_{j=0}^s\;[q\mid k+j].
\]
We shorten this equation to
$\nCMq(C_{n,k})=F_{k-1}-F_k\sum_{j=0}^s[q\mid k+j]$.
The first equality below is justified by Eqn.~(4),
and the second follows from the inductive hypothesis:
\begin{align*}
\nCMq(C_{n,k})\kern-1.2pt&=[q\nmid k]\nCMq(C_{n-k,1})+(n-k)\nCMq(C_{n,k+1})\\
&=[q\nmid k]F_k
 +(n-k)\Bigl\{F_k-F_{k+1}\sum_{j=0}^s\; [q\mid k+1+j] \;\Bigr\}\\
&=(n-k+1)F_k-[q\mid k]F_k-(n-k)F_{k+1}\sum_{j=0}^s\;[q\mid
k+1+j]\\
&=(n-k+1)F_k-[q\mid k]F_k-(n-k)F_{k+1}\sum_{j=1}^{s+1}\;[q\mid
k+j].\\
\end{align*}
The last step involved a change in summation variable.

The equation
$ [q\mid n-k]\sum_{j=1}^{s+1} [q\mid k+j]=0$
is helpful. This is clearly true when $[q\mid n-k]=0$.
If $[q\mid n-k]=1$, then $k\equiv n\pmod q$ and so 
$[q\mid k+j]=[j=s+2]$. Thus in either case the expression is zero.
Using the equation $F_k=(n-k-[q\mid n-k])F_{k+1}$, therefore gives
\begin{align*}
\nCMq(C_{n,k})&=(n-k+1)F_k-[q\mid k]F_k-F_k\sum_{j=1}^{s+1}\;[q\mid k+j]\\
&=(n-k+1)F_k-F_k\sum_{j=0}^{s+1}\;[q\mid k+j]\\
&=^*(n-k+1-[q\mid n-k+1])F_k-F_k\sum_{j=0}^s\;[q\mid k+j]\\
&=F_{k-1}-F_k\sum_{j=0}^s\;[q\mid k+j]\\
\end{align*}
$\phantom{}^*$where the second last equality uses 
$[q\mid n-k+1]=[q\mid k+s+1]$ since $s\equiv -n-2\pmod q$. This
completes the inductive proof.
\end{proof}

\section{Estimations and applications}

The recurrence relations of Lemma~\ref{L:Recurrences}
give algorithms which are quadratic in $n$ for computing these
numbers. As the conjugacy classes of $S_n$ correspond bijectively to
partitions of $n$, these numbers can be computed by summing over
certain partitions. This gives rise to slower algorithms for computing
these numbers. In practice, however, we need not compute all the
significant digits of these numbers, usually the first four
suffice. Good lower bounds may be found quickly by considering some of
the large relevant conjugacy classes. 

It is a simple (and somewhat surprising) consequence of
Theorem~\ref{T:Main} that the proportion, $p_{q,n}$,
of elements of $S_n$ having no cycles of length divisible by $q$
is the same for $n=mq, mq+1, \dots, mq+q-1$. Estimates for
$p_{q,n}=\nCMq(S_n)/n!$ are obtained below.

If $q=1$, then $p_{q,n}=0$. Assume henceforth that $q\ge 2$.
Useful upper and lower bounds for
$p_{q,mq}=\prod_{k=1}^m(1-(qk)^{-1})$ may be deduced from 
\begin{align*}
\left\vert\sum_{k=1}^m\log(1-(qk)^{-1})+\sum_{k=1}^m (qk)^{-1}\right\vert
&\le \sum_{k=1}^m \left\vert\log(1-(qk)^{-1})+(qk)^{-1}\right\vert\\ 
&\le \sum_{k=1}^m\sum_{i=2}^\infty \frac{(qk)^{-i}}{2}\le \sum_{k=1}^m 
(qk)^{-2}<C 
\end{align*}
where $C=\sum_{k=1}^\infty (qk)^{-2}=q^{-2}\pi^2/6$. It follows from
\[-\frac1q(1+\log m)-C\le \sum_{k=1}^m\log(1-(qk)^{-1})\le
-\frac1q \log m+C
\]
that $c_q^{-1}(em)^{-1/q}\le \prod_{k=1}^m (1-(qk)^{-1})\le
c_qm^{-1/q}$ where $c_q=e^{q^{-2}\pi^{2}/6}$. 

Recall that one motivation for computing the numbers $\nOMq(S_n)$ etc
arose from probabilistic computational group theory. Suppose we are
given a `black box' group $G$ which is known to be isomorphic to
$S_n$ for some $n$. How do we find $n$? The relative frequency of
finding an element of $G$ of odd order should be close to the
probability $\overline{\OM_2}(S_k)/k!$ for precisely two values of
$k$, say $m$ and $m+1$. If $p$ is the smallest prime divisor of $m$ or
$m+1$, then by determining the relative frequency of elements of $G$ of
order co-prime to $p$, one can determine, with quantifiable
probability, whether $n$ equals $m$ of $m+1$.

\vskip3mm
\goodbreak
{\tiny\scshape
\begin{tabbing}
\=Central Washington University\=\kill
\>S.\,P. Glasby                \\
\>Department of Mathematics    \\
\>Central Washington University\\
\>WA 98926-7424, USA           \\
\>\scriptsize\upshape\ttfamily GlasbyS@cwu.edu\\
\end{tabbing}
}

\end{document}